\documentclass[leqno,11pt, twoside]{article} 
\usepackage{amsmath, amsthm, amssymb}

\usepackage{geometry}
\geometry{a4paper}

\usepackage{fancyhdr} 
\pagestyle{fancy} 
\fancyhead{}
\fancyhead[LE,RO]{\thepage}
\fancyhead[CO]{\nouppercase{\scshape \runtitle}}
\fancyhead[CE]{\nouppercase{\scshape \runauthor}}
\fancyfoot{}

\usepackage{titlesec}

\titleformat{\section}
  {\normalfont\scshape}{\thesection .}{1em}{}
\titleformat{\subsection}
  {\normalfont\scshape}{\thesubsection}{1em}{}

\title{An effective bound for the gonality conjecture}
\date{}
\author{\scshape J\"urgen Rathmann}

\makeatletter
\let\runauthor\@author
\let\runtitle\@title
\makeatother

\usepackage{mathrsfs}  
\usepackage[cmtip]{xypic}
\usepackage{enumitem}
\setlist{nolistsep}

\newtheorem{theorem}{Theorem}[section]
\newtheorem{proposition}[theorem]{Proposition}
\newtheorem{lemma}[theorem]{Lemma}
\newtheorem{corollary}[theorem]{Corollary}

\theoremstyle{remark}
\newtheorem{remark}[theorem]{Remark}

\renewcommand\O{\mathscr{O}}
\renewcommand\P{\mathbf{P}}
\renewcommand\lto{\longrightarrow}
\DeclareMathOperator{\gon}{gon}
\DeclareMathOperator{\Sym}{Sym}
\DeclareMathOperator{\Ker}{Ker}

\DeclareMathOperator{\Hom}{Hom}

\DeclareMathOperator{\ev}{ev}
\DeclareMathOperator{\rank}{rank}

\begin{document}
\bibliographystyle{plain}
\maketitle

\section{Introduction}

Ein and Lazarsfeld have shown that one can read off the gonality of an algebraic curve from 
its syzygies in the embedding defined by any one line bundle of sufficiently large degree. This note extends
their approach and shows that the gonality can be detected from the syzygies of an embedding by
any line bundle of degree at least $4g-3$. 
\bigskip

Let $C\subset\P H^0(L)=\P^r$ be a smooth complex projective curve of genus $g\ge 2$, embedded by a very ample 
line bundle $L$ of degree $d$.

We denote by $S=\Sym H^0(C,L)$ the homogeneous coordinate ring of $\P^r$, by
$R=R(L)=\oplus_mH^0(C,mL)$
the graded $S$-module associated to $L$, and by $E_\bullet=E_\bullet(L)$ the minimal graded free resolution 
of $R$ over $S$:
\begin{equation*}
0\lto E_{r-1}\lto\dotsc\lto E_2\lto E_1\lto E_0\lto R\lto 0.
\end{equation*}
Let $K_{p,q}(C;L)$ be the vector space of minimal generators of $E_p$ in degree $p+q$, so that
\begin{equation*}
E_p=\oplus_qK_{p,q}(C;L)\otimes S(-p-q).
\end{equation*}

If $L$ is normally generated, then $E_0=S$, and the remainder of the complex $E_\bullet$ is a minimal
graded resolution of the homogeneous ideal $I_{C/\P^r}$ of $C$ in $\P^r$.

If $L$ is nonspecial then $K_{p,q}(C;L)$ vanishes for $q\ge 3$, hence for $\deg(L)\ge 2g+1$ the resolution
essentially consists of two strands $K_{p,1}(C;L)$ and $K_{p,2}(C;L)$.

For a discussion of these strands, see \cite{GL} and \cite[ch.\ 8B]{Eis}.

The gonality conjecture of Green and Lazarsfeld \cite{GL} concerns the ``quadratic strand'' $K_{p,1}(C;L)$ for
line bundles of large degree. If $C$ has a basepointfree pencil of degree $k$, then the linear spans of the fibers 
of this pencil form a $k$-dimensional rational normal scroll $S$. The minimal graded resolution of its ideal $I_{S/\P^r}$
has a linear strand of length $r-k$ which embeds into the resolution of $I_{C/\P^r}$.

Green and Lazarsfeld conjectured in 1986 and Ein and Lazarsfeld proved in 2014 that such pencils 
determine the range of values for $p$ where $K_{p,1}(C;L)$ does not vanish, if the degree of $L$ is large.

We show that this already holds for $\deg L\ge 4g-3$.

\begin{theorem}\label{thm11}
If $H^1(L\otimes K^{-1})=0$\textup{,} then
\begin{equation*}
K_{p,1}(C;L)\ne 0 \iff 1\le p\le r-\gon(C).
\end{equation*}
In particular\textup{,} the equivalence holds if $\deg(L)\ge 4g-3$.
\end{theorem}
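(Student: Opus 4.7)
The implication $1\le p\le r-\gon(C)\Rightarrow K_{p,1}(C;L)\ne 0$ is already indicated in the introduction: a basepoint-free pencil of degree $k=\gon(C)$ spans a rational normal $k$-scroll $\Sigma\subset\P^r$ containing $C$, and the length-$(r-k)$ linear strand of the Eagon-Northcott resolution of $I_{\Sigma/\P^r}$ embeds into the minimal resolution of $I_{C/\P^r}$ via $I_{\Sigma/\P^r}\subset I_{C/\P^r}$. The final sentence of the theorem also follows at once: by Serre duality, $H^1(L\otimes K^{-1})\cong H^0(2K\otimes L^{-1})^\vee$ vanishes once $\deg L>4g-4$.

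The substance of the theorem is therefore the vanishing $K_{p,1}(C;L)=0$ for $p\ge r-\gon(C)+1$. The hypothesis $H^1(L\otimes K^{-1})=0$ forces $L$ to be nonspecial — otherwise a nonzero section of $K\otimes L^{-1}$, multiplied by any nonzero section of $K$ (available because $g\ge 2$), would produce a section of $2K\otimes L^{-1}$ — and is exactly what is needed to invoke Green's duality theorem on the linear strand, yielding
\[
K_{p,1}(C;L)^\vee \;\cong\; K_{r-1-p,\,1}(C;K_C,L).
\]
The task thus becomes the vanishing $K_{q,1}(C;K_C,L)=0$ for $0\le q\le\gon(C)-2$.

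The standard tool is the evaluation bundle: set $V:=H^0(L)$ and $M_L:=\Ker(V\otimes\O_C\twoheadrightarrow L)$, a rank-$r$ vector bundle on $C$. Tensoring the tautological Koszul exact sequence
\[
0\lto\wedge^{q+1}M_L\lto\wedge^{q+1}V\otimes\O_C\lto\wedge^q M_L\otimes L\lto 0
\]
with $K_C$ and chasing cohomology — using Serre duality together with $H^1(K_C)\cong\mathbb{C}$ — identifies $K_{q,1}(C;K_C,L)^\vee$ with the cokernel of a natural map $\wedge^{q+1}V^\vee\to H^0(C,\wedge^{q+1}M_L^\vee)$. The theorem reduces to the assertion that this map is surjective whenever $q+1\le\gon(C)-1$.

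This surjectivity is the main obstacle and the geometric heart of the argument. Following the Aprodu/Ein-Lazarsfeld circle of ideas, a putative extra section of $\wedge^{q+1}M_L^\vee$ should be pulled back to the symmetric power $C^{(q+1)}$, where $\wedge^{q+1}M_L$ is controlled by a tautological rank-$(q+1)$ bundle; nonvanishing of the lifted section then forces the existence of an effective divisor $D\subset C$ of degree $q+1$ with $h^0(\O_C(D))\ge 2$, contradicting $q+1<\gon(C)$. Executing this reduction under only the sharp hypothesis $H^1(L\otimes K^{-1})=0$, rather than under the asymptotic positivity of $L$ required in the original Ein-Lazarsfeld proof, is where the real work of the paper must lie: the main technical task is a careful cohomological bookkeeping that controls all the error terms in the spectral sequences and evaluation-bundle chases uniformly for line bundles of degree as low as $4g-3$.
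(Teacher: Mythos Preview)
Your reductions are all correct: the scroll argument for the non-vanishing direction, the Serre-duality check that $\deg L\ge 4g-3$ implies $H^1(L\otimes K^{-1})=0$, the observation that this hypothesis forces $L$ nonspecial, the Koszul duality $K_{p,1}(C;L)^\vee\cong K_{r-1-p,1}(C,K_C;L)$, and the identification of the latter with the cokernel of $\wedge^{q+1}V^\vee\to H^0(C,\wedge^{q+1}M_L^\vee)$. Up to this point you agree with the paper, which packages the same reduction as ``Theorem \ref{thm11} follows from Theorem \ref{thm12} with $B=K_C$''.

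The gap is your final paragraph. You do not prove the required surjectivity; you describe a heuristic (``a putative extra section \ldots\ forces the existence of an effective divisor $D$ of degree $q+1$ with $h^0(\O_C(D))\ge2$'') and then explicitly defer the work to ``careful cohomological bookkeeping''. Two problems. First, the mechanism you sketch---extracting a pencil directly from a nonvanishing syzygy---is precisely what the introduction warns is \emph{not} known beyond Green's cases $p=r-1,r-2$; there is no such extraction argument here. Second, and more substantively, the paper's proof is not bookkeeping of the kind you anticipate: it is a direct inductive vanishing. After passing (via Lemma \ref{lm22}) from $C_{p+1}$ to $C^{p+1}$, the key object is the bundle
\[
M_{p+1,B}=\pi_{p+2,*}\Bigl(pr_{p+2}^*B\otimes\O\bigl(-\textstyle\sum_{i=1}^{p+1}\Delta_{i,p+2}\bigr)\Bigr)
\]
on $C^{p+1}$, and the entire argument (Theorem \ref{thm31}) runs by induction on $p$ through the short exact sequence
\[
0\lto M_{p+1,B}\lto \pi_{p+1}^*M_{p,B}\lto pr_{p+1}^*B\otimes\O\bigl(-\textstyle\sum_{i=1}^{p}\Delta_{i,p+1}\bigr)\lto 0,
\]
which lets one control $H^k$ of $\wedge^m M_{p+1,B}$ twisted by $r^*N_L$ in terms of the same data for $M_{p,B}$, with the hypotheses $H^1L=0$ and $H^1(B^{-1}\otimes L)=0$ exactly absorbing the error terms at each step (K\"unneth, Leray for $pr_{p+1}$). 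That inductive structure is the genuine content and is absent from your proposal.
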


Although our result represents a significant step forward, we do not expect the bound $4g-3$ to be optimal.
Aprodu and Voisin \cite{Apr1}, \cite{AprVoisin} found that $\deg(L)\ge 3g$ 
suffices for a general curve of each gonality.
Green \cite{Green1} had earlier demonstrated the conclusion for $p=r-1$ resp.\ $p=r-2$ under even lower 
bounds of $\deg(L)\ge 2g+1$ resp.\ $\deg(L)\ge 2g+2$ and $r\ne 5$ (the exceptions for $r=5$ are 
smooth curves of genus 3 and degree $8=2g+2$ on the Veronese surface in $\P^5$).  

\bigskip
While it is not hard to exhibit a non-vanishing syzygy from a pencil on $C$, it is not
known how to construct a pencil out of a non-vanishing syzygy, except for the cases $p=r-1$ and $p=r-2$ covered
by Green. Ein und Lazarsfeld's proof works by contradiction. They assume
that $C$ does not have a pencil of low degree, and prove the vanishing of the related syzygy module.

Following their arguments, \ref{thm11} can be derived from a more general result on the syzygies associated to
a line bundle $B$. Letting $R=R(B;L)=\oplus_m H^0(C,B+mL)$, its minimal graded free resolution $E_\bullet(B;L)$ 
over $S$ gives rise to Koszul cohomology groups $K_{p,q}(C,B;L)$.

Recall that $B$ is called $p$-very ample if every effective divisior $\xi$ of degree $(p+1)$ on $C$ imposes 
independent conditions on the sections of $B$; e.g., the canonical divisor $K_C$ is $p$-very ample
if and only if $C$ does not have a pencil of degree $<p+2$.

Our main result is a strengthening of one direction of \cite[Thm.\ B]{EinLaz}.

\begin{theorem}\label{thm12}
Let $B$ be a $p$-very ample line bundle. If $H^1L=0=H^1(B^{-1}\otimes L)$\textup{,} then $K_{p,1}(C,B;L)=0$.
\end{theorem}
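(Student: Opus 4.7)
My plan is to follow the kernel-bundle method of Ein--Lazarsfeld, refining the cohomological bookkeeping so that only the vanishings $H^1(L)=0$ and $H^1(B^{-1}\otimes L)=0$ are needed, rather than the stronger numerical positivity on $\deg(L)$ they assume.

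The initial reduction is standard. Writing $V=H^0(L)$ and taking the $(p+1)$-st exterior power of $0\to M_L\to V\otimes\O_C\to L\to 0$ twisted by $B$,
\begin{equation*}
0\lto \wedge^{p+1}M_L\otimes B\lto \wedge^{p+1}V\otimes B\lto \wedge^p M_L\otimes L\otimes B\lto 0,
\end{equation*}
one identifies $K_{p,1}(C,B;L)$ with the cokernel of
\begin{equation*}
\alpha\colon \wedge^{p+1}V\otimes H^0(B)\lto H^0\bigl(C,\wedge^p M_L\otimes L\otimes B\bigr),
\end{equation*}
or equivalently with the kernel of the connecting map $H^1(\wedge^{p+1}M_L\otimes B)\to \wedge^{p+1}V\otimes H^1(B)$. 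The task is thus to prove that $\alpha$ is surjective.

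To bring the $p$-very ampleness of $B$ to bear, I would transfer the problem to the symmetric product $C_{p+1}$. For a line bundle $F$ on $C$ write $F^{(p+1)}$ for the rank-$(p+1)$ tautological (``secant'') bundle on $C_{p+1}$. The $p$-very ampleness of $B$ is precisely the surjectivity of the evaluation $H^0(B)\otimes\O_{C_{p+1}}\twoheadrightarrow B^{(p+1)}$, and standard identifications (Voisin, Ein--Lazarsfeld) relate $H^0(C,\wedge^p M_L\otimes L\otimes B)$ to global sections on $C_{p+1}$ of bundles built out of $\det L^{(p+1)}$ and $B^{(p+1)}$. Under this translation the surjectivity of $\alpha$ becomes the vanishing of an obstruction group on $C_{p+1}$. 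The hypothesis $H^1(L)=0$ ensures that $L^{(p+1)}$ is cohomologically well-behaved in the required sense (e.g.\ that auxiliary $H^i$ groups of powers/twists of $\det L^{(p+1)}$ vanish), while $H^1(B^{-1}\otimes L)=0$, equivalently $H^0(K_C\otimes B\otimes L^{-1})=0$, eliminates the remaining obstruction via a Leray/base-change computation.

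The principal difficulty is carrying out this translation between cohomology on $C$ and cohomology on $C_{p+1}$ without wasting positivity. In earlier arguments the analogous step required invoking Kodaira/Kawamata--Viehweg-type vanishing on $C_{p+1}$, which in turn demands substantially more positivity on $L$ than the two hypotheses above supply. The novelty that brings the bound down to $4g-3$ must lie in a careful choice of intermediate sheaf complexes on $C_{p+1}$, set up so that the two explicit $H^1$-vanishings on $C$ are used precisely to kill the relevant obstruction, with the surjection $H^0(B)\otimes\O_{C_{p+1}}\twoheadrightarrow B^{(p+1)}$ supplied by $p$-very ampleness providing the final input.
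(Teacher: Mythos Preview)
Your proposal correctly sets up the standard reduction to the symmetric product and correctly identifies where the difficulty lies, but it stops precisely at the point where the new idea is required. Saying that ``the novelty \dots\ must lie in a careful choice of intermediate sheaf complexes on $C_{p+1}$'' is not a proof; it is a restatement of the problem. The Ein--Lazarsfeld argument already provides such a complex on $C_{p+1}$ (namely the resolution of $\Ker\ev_B$ by a Koszul-type complex), and its failure to yield the sharp bound is exactly what you must overcome. You have not indicated what replaces it.

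The paper's actual mechanism is different from what you suggest: rather than working harder on $C_{p+1}$, it \emph{pulls the problem back to the Cartesian product} $C^{p+1}$ (Lemma~\ref{lm22}), where the bundle $M_{p+1,B}=\pi_{p+2,*}\bigl(pr_{p+2}^*B\otimes\O(-\sum_i\Delta_{i,p+2})\bigr)$ admits the short exact sequence
\[
0\lto M_{p+1,B}\lto \pi_{p+1}^*M_{p,B}\lto pr_{p+1}^*(B)\otimes\O\bigl(-\textstyle\sum_{i=1}^p\Delta_{i,p+1}\bigr)\lto 0.
\]
This sequence, invisible on the symmetric product, sets up an induction on $p$. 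The induction hypothesis must be formulated for \emph{all} exterior powers $\wedge^m M_{p,B}$ and all $H^k$ with $k>0$ (Theorem~\ref{thm31}), because the resolution of $\wedge^m M_{p+1,B}$ coming from the displayed sequence involves higher $\wedge^{m+l}\pi_{p+1}^*M_{p,B}$. At each step the only inputs from $C$ are $H^1(L)=0$ and $H^1(B^{-1}\otimes L)=0$, fed in via K\"unneth and Leray for the projections $pr_{p+1}$ and $\pi_{p+1}$. Without this inductive structure on the Cartesian product, there is no visible way to propagate the two $H^1$-vanishings on $C$ up to the required vanishing on a $(p+1)$-dimensional variety, and your proposal does not supply one.
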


As explained by Ein and Lazarsfeld, Theorem \ref{thm11} follows from setting $B=K_C$ in 
\ref{thm12}, using the duality
between $K_{p,q}(C,B;L)$ and $K_{r-1-p,2-q}(C,K_C\otimes B^{-1};L)$.

\bigskip
For the proof of \ref{thm12}, we use the representation of $K_{p,1}(C,B;L)$ 
as cokernel of a map of global sections of sheaves 
on the symmetric product $C_{p+1}$ \cite{EinLaz}. 

Our analysis starts by transferring the question from the symmetric to the cartesian product.
Denoting by $pr_{p+2}\colon C^{p+2}\lto C$ resp.\ $\pi_{p+2}\colon C^{p+2}\lto C^{p+1}$ the projection 
maps on the $(p+2)$-nd resp. first $p+1$ components, by $\Delta_{i,j}$ the diagonals $x_i=x_j$ in $C^{p+2}$, we set
\begin{equation*}
M_B=M_{p+1, B}=\pi_{p+2,*}\big(pr_{p+2}^* (B)\otimes\O(-\sum_{i=1}^{p+1}\Delta_{i,p+2})\big),
\end{equation*}
and we show that if $B$ is $p$-very ample, the vanishing of $K_{p,1}(C,B;L)$ follows from the vanishing of
\begin{equation*}
H^1\big(C^{p+1},M_B\otimes(\otimes_{i=1}^{p+1}pr_i^*(L))\otimes\O(-\sum_{1\le i<j\le p+1}\Delta_{i,j})\big).
\end{equation*}

There are two standard approaches for proving such vanishing theorems:
\begin{itemize}
\item[(i)]
using a filtration of $M_B$, and
\item[(ii)]
using a resolution of $M_B$ derived from 
a representation of $M_B$ as a kernel bundle.
\end{itemize}

If we have no information on $B$ except that it is $p$-very ample, only method (ii) is at our disposal. 
Theorem \ref{thm12} represents this case (proved in \ref{thm31}), and the required representation of $M_B$ as a 
kernel bundle is given by
\begin{equation*}
0\lto M_{p+1,B}\lto \pi_{p+1}^*M_{p,B}\lto pr_{p+1}^*(B)\otimes\O\big(-\sum_{i=1}^p\Delta_{i,p+1}\big)\lto 0
\end{equation*}
which allows us to proceed by induction.

If there is a point $x\in C$ such that $B(-x)$ is again $p$-very ample, then we can start to filter $M_B$ as
\begin{equation*}
0\lto M_{B(-x)}\lto M_B\lto \O(-x,\dotsc,-x)\lto 0.
\end{equation*}
Improved bounds can be obtained if the degree of $B$ is large compared to $g$ 
(see \ref{prop32} and \ref{prop36}); for $B=K_C$ (the canonical bundle) this approach 
leads to an improvement if and only if $K$ is actually $(p+1)$-very ample (see \ref{prop310}).

What happens if $K$ is not $(p+1)$-very ample? For $p=0$, i.e., hyperelliptic $C$, the bundle
$M_{1,B}$ is the the direct sum of $g-1$ copies of $(g^1_2)^{-1}$, and $K_{0,1}(C,K;L)$ vanishes
for $\deg L\ge 2g+1$. For $p=1$ we know from Green's work that $\deg L\ge 2g+2$ 
($r\not=5$) suffices.
It seems likely that similar improvements are possible for higher $p$; however, they require
a different approach.

\bigskip
In section 2 we recall Voisin's representation of the Koszul cohomology groups, following Ein and Lazarsfeld. 
The third section contains the proofs of our main results. Section 4 discusses improvements
for $B=K$, taking the geometry of the canonical embedding into account.

\section{Review of the setup}

We recall the setup used by Ein and Lazarsfeld.

$C\subset\P H^0(L)=\P^r$ is a smooth complex projective curve of genus $g$, embedded 
by a very ample line bundle $L$ of degree $d$. Given another line bundle $B$ on $C$, let
\begin{equation*}
K_{p,q}(B;L)=K_{p,q}(C,B;L)
\end{equation*}
be the cohomology groups of the complex
\begin{multline*}
\wedge^{p+1}H^0(L)\otimes H^0(B\otimes L^{\otimes (q-1)}) \lto \wedge^pH^0(L)\otimes 
H^0(B\otimes L^{\otimes q})\\
\lto\wedge^{p-1}H^0(L)\otimes H^0(B\otimes L^{\otimes(q+1)}).
\end{multline*}

Denoting by $C_k$ the $k$-th symmetric product of $C$, the map
\begin{equation*}
\sigma_{p+1}\colon C\times C_p\lto  C_{p+1},\enspace \enspace (x,\xi)\mapsto x+\xi
\end{equation*}
realizes $C\times C_p$ as the universal family of degree $p+1$ divisors over $C_{p+1}$. 

Now let
\begin{equation*}
E_B=E_{p+1,B}=\sigma_{p+1,*}pr_1^*(B)
\end{equation*}
where $pr_i$ is the projection on the $i$-th factor. $E_B$ is a vector bundle of rank $p+1$ 
on $C_{p+1}$, and $H^0(C_{p+1},E_B)=H^0(C,B)$. We therefore have a homomorphism
\begin{equation}\label{eq1}
\ev_B=\ev_{p+1,B}\colon H^0(C,B)\otimes\O_{C_{p+1}}\lto E_B
\end{equation}
of vector bundles on $C_{p+1}$. By definition, $\ev_B$ is surjective if and only if 
$B$ is $p$-very ample. Setting
\begin{equation*}
N_L=N_{p+1,L}=\det E_L,
\end{equation*}
$\wedge^{p+1}\ev_L$ determines an isomorphism \cite{Voisin1}
\begin{equation*}
\wedge^{p+1}H^0(C,L)\lto H^0(C_{p+1},N_L).
\end{equation*}
Tensoring $\ev_B$ by $N_L$, now consider the following map of vector bundles on $C_{p+1}$:
\begin{equation}\label{eq2}
H^0(C,B)\otimes N_L\lto E_B\otimes N_L:
\end{equation}

\begin{lemma}[Voisin, Ein-Lazarsfeld {\cite[Lemma 1.1]{EinLaz}}] 
The global sections of $E_B\otimes N_L$ can be identified with the space
\begin{equation*}
Z_{p,1}(B;L)=\Ker\big(\wedge^pH^0(L)\otimes H^0(B\otimes L)\lto\wedge^{p-1}H^0(L)\otimes 
H^0(B\otimes L^{\otimes 2})\big)
\end{equation*}
of Koszul cycles. Under this identification\textup{,} the homomorphism
\begin{equation*}
H^0(C,B)\otimes H^0(C_{p+1},N_L)=H^0(C,B)\otimes\wedge^{p+1}H^0(C,L)\lto 
H^0(C_{p+1},E_B\otimes N_L)
\end{equation*}
arising from \textup{(\ref{eq2})} agrees with the Koszul differential. In particular\textup{,}
\begin{equation*}
K_{p,1}(C,B;L)=0
\end{equation*}
if and only if the bundle map \textup{(\ref{eq2})} is surjective on global sections.
\end{lemma}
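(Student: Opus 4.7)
My plan is to follow Voisin's strategy and work on the symmetric product $C_{p+1}$, using the finite map $\sigma_{p+1}\colon C\times C_p\to C_{p+1}$ to reduce recursively in $p$. There are three things to check: the two global-section identifications, and the fact that the map of global sections induced by \textup{(\ref{eq2})} is the Koszul differential.

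First I would establish Voisin's identification $\wedge^{p+1}H^0(L)\cong H^0(C_{p+1},N_L)$. Pulling back through the quotient $\pi\colon C^{p+1}\to C_{p+1}$, the determinant $N_L$ becomes $L^{\boxtimes(p+1)}$ equipped with the sign-twisted $\mathfrak{S}_{p+1}$-equivariant structure (signs arise from reordering the summands of $\pi^*E_L$ inside the wedge); sign-isotypic descent recovers $\wedge^{p+1}H^0(L)$, and the resulting iso coincides with $\wedge^{p+1}\ev_L$ on global sections.

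The core step is to identify $H^0(E_B\otimes N_L)$ with $Z_{p,1}(B;L)$, inductively on $p$. By the projection formula, $H^0(E_B\otimes N_L)=H^0(C\times C_p,pr_1^*B\otimes\sigma_{p+1}^*N_L)$. The key input is the formula $\sigma_{p+1}^*N_L\cong pr_1^*L\otimes pr_2^*N_{L,p}\otimes\O(-\Delta_p)$, where $\Delta_p\subset C\times C_p$ is the universal incidence divisor; this comes from taking determinants in the natural exact sequence on $C\times C_p$ that splits off the $pr_1^*L$ summand of $\sigma_{p+1}^*E_L$ away from $\Delta_p$. Applying $H^0$ to $0\to\O(-\Delta_p)\to\O\to\O_{\Delta_p}\to 0$ tensored with $pr_1^*(B\otimes L)\otimes pr_2^*N_{L,p}$ yields
\begin{equation*}
0\lto H^0(E_B\otimes N_L)\lto H^0(B\otimes L)\otimes\wedge^pH^0(L)\lto H^0\bigl(\Delta_p,(pr_1^*(B\otimes L)\otimes pr_2^*N_{L,p})|_{\Delta_p}\bigr),
\end{equation*}
where Voisin at level $p$ supplies the middle identification. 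Since $\Delta_p\cong C\times C_{p-1}$ with $pr_2$ restricting to $\sigma_p$, iterating the same procedure shows that the right-hand arrow is the Koszul differential $\wedge^pH^0(L)\otimes H^0(B\otimes L)\to\wedge^{p-1}H^0(L)\otimes H^0(B\otimes L^{\otimes 2})$, whence $H^0(E_B\otimes N_L)=Z_{p,1}(B;L)$. Compatibility of \textup{(\ref{eq2})} with the Koszul differential on $\wedge^{p+1}H^0(L)\otimes H^0(B)$ is then a direct unwinding: on global sections, $b\otimes(\ell_1\wedge\cdots\wedge\ell_{p+1})$ is sent to $\sum_i(-1)^i(\ell_1\wedge\cdots\widehat{\ell_i}\cdots\wedge\ell_{p+1})\otimes(b\ell_i)$; hence the cokernel on global sections is $Z_{p,1}(B;L)$ modulo the image of the Koszul differential, i.e.\ $K_{p,1}(C,B;L)$.

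The main obstacle is the identification $\sigma_{p+1}^*N_L\cong pr_1^*L\otimes pr_2^*N_{L,p}\otimes\O(-\Delta_p)$: the twist by $\O(-\Delta_p)$ is crucial — it is precisely what cuts the Koszul cycles out of the full tensor space $\wedge^pH^0(L)\otimes H^0(B\otimes L)$ — and confirming that the inductive boundary map reproduces the Koszul differential requires a careful sign and functoriality computation.
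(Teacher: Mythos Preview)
The paper does not give its own proof of this lemma; it is quoted verbatim from \cite[Lemma~1.1]{EinLaz} (going back to Voisin) and used as a black box. Your outline is precisely the Voisin/Ein--Lazarsfeld argument being cited: push--pull along $\sigma_{p+1}$, the key identity $\sigma_{p+1}^*N_L\cong pr_1^*L\otimes pr_2^*N_{L,p}(-\Delta_p)$, and the recursive identification of the boundary map with the Koszul differential. So there is nothing to compare --- you are reproducing the referenced proof.

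One imprecision worth fixing: you write that the pullback of $N_L$ to $C^{p+1}$ ``becomes $L^{\boxtimes(p+1)}$ equipped with the sign-twisted $\mathfrak{S}_{p+1}$-equivariant structure''. As the paper itself records at the start of Section~3,
\[
r^*N_L \;=\; \bigotimes_{i=1}^{p+1} pr_i^*(L)\;\otimes\;\O\Big(-\sum_{1\le i<j\le p+1}\Delta_{i,j}\Big),
\]
not $L^{\boxtimes(p+1)}$ on the nose. The descent equivariant structure on this sheaf is the \emph{standard} one (invariants give $H^0(N_L)$); it is only after embedding into $L^{\boxtimes(p+1)}$ via the section of $\O(\sum\Delta_{i,j})$ cutting out the diagonals that one sees the sign character, because that section is itself alternating. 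Your conclusion $H^0(N_L)\cong\wedge^{p+1}H^0(L)$ is unaffected --- anti-invariant sections of $L^{\boxtimes(p+1)}$ vanish on all diagonals --- but the statement as you wrote it is not literally correct.
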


Ein and Lazarsfeld complete their proof by applying general vanishing theorems
to the kernel bundle of (\ref{eq2}). They obtain a bound of $\deg(L)>(p^2+p+2)(g-1)+(p+1)\deg(B)$
as a sufficient condition for the vanishing.

\bigskip
Our contribution starts at this point. 
We begin by transferring the question from the symmetric product to the cartesian product. 

Denote by $\pi=\pi_n\colon C^n\to C^{n-1}$ the projection on the first $n-1$ components 
(forget $n$-th component) and by $r=r_n\colon C^n\to C_n$ the canonical projection. 

The map (\ref{eq2}) pulls back by $r$, and by taking global sections, we obtain a commutative diagram
\begin{equation}\label{eq5}
\begin{aligned}
\xymatrix@C=3cm{
H^0(C,B)\otimes H^0(N_L) \ar[r]^-{H^0(\ev_B\otimes N_L)} \ar[d] & H^0(E_B\otimes N_L) \ar[d] \\
H^0(C,B)\otimes H^0 \big(r^*N_L\big) \ar[r]^-{H^0(r^*(\ev_B\otimes N_L))} & 
H^0\big(r^*(E_B\otimes N_L)\big).
}
\end{aligned}
\end{equation}

\begin{lemma}\label{lm22}
If the horizontal map on the bottom of the diagram \textup{(\ref{eq5})} is surjective\textup{,} 
then the horizontal map on the top is also surjective.
\end{lemma}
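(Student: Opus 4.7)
The plan is to exploit the fact that $r = r_n\colon C^n\to C_n$ is a finite Galois covering with Galois group the symmetric group $\Sigma_n$ on $n=p+1$ letters, so that both rows of diagram (\ref{eq5}) fit into a single $\Sigma_n$-equivariant picture on $C^n$, with the top row obtained from the bottom by passing to invariants.

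First I would recall that for any coherent sheaf $\mathscr{F}$ on $C_n$, the pullback $r^*\mathscr{F}$ carries a natural $\Sigma_n$-equivariant structure coming from the permutation action on $C^n$, and that in characteristic zero the averaging operator $\tfrac1{n!}\sum_{\sigma\in\Sigma_n}\sigma$ identifies $H^0(C_n,\mathscr{F})$ with the $\Sigma_n$-invariant subspace of $H^0(C^n,r^*\mathscr{F})$. Applying this to $\mathscr{F}=N_L$ and to $\mathscr{F}=E_B\otimes N_L$, and noting that $\Sigma_n$ acts trivially on the tensor factor $H^0(C,B)$, both vertical arrows in (\ref{eq5}) become the inclusions of $\Sigma_n$-invariant subspaces into the corresponding $\Sigma_n$-representations. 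The bottom horizontal map, being the pullback of the top along the $\Sigma_n$-equivariant morphism $r$, is itself $\Sigma_n$-equivariant.

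With this in place, the lemma reduces to a standard averaging argument. Given $s\in H^0(E_B\otimes N_L)$, regard $r^*s$ as a $\Sigma_n$-invariant element of $H^0(r^*(E_B\otimes N_L))$; surjectivity of the bottom map produces some $t\in H^0(C,B)\otimes H^0(r^*N_L)$ mapping to $r^*s$. Set $\tilde t := \tfrac1{n!}\sum_{\sigma\in\Sigma_n}\sigma\cdot t$. By $\Sigma_n$-equivariance of the bottom arrow and $\Sigma_n$-invariance of $r^*s$, the element $\tilde t$ still maps to $r^*s$, while by construction $\tilde t$ is $\Sigma_n$-invariant and therefore lies in the image of the left vertical inclusion $H^0(C,B)\otimes H^0(N_L)\hookrightarrow H^0(C,B)\otimes H^0(r^*N_L)$. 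Its preimage in $H^0(C,B)\otimes H^0(N_L)$ then maps under the top arrow to the preimage of $r^*s$ under the right vertical inclusion, namely to $s$ itself.

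The only step requiring any care is the identification $H^0(C_n,\mathscr{F}) = H^0(C^n,r^*\mathscr{F})^{\Sigma_n}$, which follows from the projection formula for the finite map $r$ together with the decomposition of $r_*\O_{C^n}$ into its $\Sigma_n$-isotypic components (with $\O_{C_n}$ as the invariant summand). Once this and the $\Sigma_n$-equivariance of the bottom map are recorded, the rest of the proof is purely formal.
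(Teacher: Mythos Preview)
Your proof is correct and follows essentially the same averaging argument as the paper: pull back a section, lift it along the surjective bottom map, average over the symmetric group to force invariance, and then recognize the result as coming from the top-left corner. The paper cites \cite[(5.2)]{AprNag} for the identification of $H^0(C_n,\mathscr{F})$ with the $\Sigma_n$-invariants of $H^0(C^n,r^*\mathscr{F})$, whereas you spell this out via the projection formula and the splitting $\O_{C_n}\hookrightarrow r_*\O_{C^n}$; one small terminological quibble is that $r$ is ramified along the big diagonal, so ``Galois covering'' is slightly loose, but your actual argument only uses that $r$ is finite and flat with $(r_*\O_{C^n})^{\Sigma_n}=\O_{C_n}$, which is correct.
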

\begin{proof}
Given a section $s_0$ in the top right, consider its image $s$ in the bottom right cohomology group. 
This image is invariant under the action of the symmetric group $S_{p+1}$. 
Provided that the bottom horizontal map is surjective, we can average a preimage $s'$ in the bottom
left over all translates (using the characteristic 0 assumption) in order to arrive at an $S_{p+1}$-invariant preimage $s''$.
The $S_{p+1}$-invariance then implies \cite[(5.2)]{AprNag} that $s''$ lies in the image of the (injective) vertical map 
from the top left. This provides the sought after preimage of $s_0$.
\end{proof}

\section{Vanishing results}

We continue in the setup from section 2:

$C\subset\P H^0(L)=\P^r$ is a smooth complex projective curve of genus $g$, embedded by a very ample 
line bundle $L$ of degree $d$, $B$ a $p$-very ample line bundle on $C$. We work on the cartesian product $C^{p+2}$.

The map $\ev_B$ from (\ref{eq1}) above pulls back to a map of sheaves on $C^{p+1}$
\begin{equation*}
\ev'=r^*(\ev_B)\colon H^0(C,B)\otimes\O_{C^{p+1}}\lto r^*E_B
\end{equation*}
that arises from the canonical map
\begin{equation*}
\O_{C^{p+2}}\lto \O_{\sum_{i=1}^{p+1}\Delta_{i,p+2}}
\end{equation*}
by tensoring with $pr_{p+2}^*(B)$ and applying $\pi_{p+2,*}$. Further we have
\begin{equation*}
r^*N_L=\otimes_{i=1}^{p+1}pr_i^*(L)\otimes\O\big(-\sum_{1\le i<j\le p+1}\Delta_{i,j}\big)
\enskip\enskip\text{\cite[proof of (5.2)]{AprNag}.}
\end{equation*}

Denoting the vector bundle
\begin{equation*}
M_B=M_{p+1, B}=\pi_{p+2,*}\big(pr_{p+2}^* (B)\otimes\O(-\sum_{i=1}^{p+1}\Delta_{i,p+2})\big),
\end{equation*}
the surjectivity of the bottom horizontal map of diagram (\ref{eq5}) from the previous section would follow from
\begin{equation}\label{eq6}
H^1\big(C^{p+1},M_B\otimes(\otimes_{i=1}^{p+1}pr_i^*(L))\otimes\O(-\sum_{1\le i<j\le p+1}\Delta_{i,j})\big)=0
\end{equation}

We now show by induction on $p$ more generally
\begin{theorem}\label{thm31}
If $L$ is non-special and $H^1(C,B^{-1}\otimes L)=0$\textup{,} then we have
\begin{equation}\label{eq7}
H^k\big(C^{p+1},\wedge^m M_B\otimes pr_1^*(L)\otimes\dotsm \otimes pr_{p+1}^*(L)
\otimes\O(-\sum_{1\le i<j\le p+1}\Delta_{i,j})\big)=0
\end{equation}
for all $k>0$\textup{,} $m>0$.
\end{theorem}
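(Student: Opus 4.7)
My plan is to argue by induction on $p$. For the base case $p=0$, the statement reduces to $H^1(C,\wedge^m M_{1,B}\otimes L)=0$ for $m\ge 1$, where $M_{1,B}$ is the kernel of the evaluation surjection $H^0(B)\otimes\O_C\twoheadrightarrow B$ (surjective because $B$ is $0$-very ample). I would deduce this from the Koszul-type resolution
\begin{equation*}
0\lto\wedge^m M_{1,B}\lto\wedge^m H^0(B)\otimes\O_C\lto\wedge^{m-1}H^0(B)\otimes B\lto\cdots\lto B^{\otimes m}\lto 0,
\end{equation*}
tensored with $L$, by chasing cohomology through the resulting short exact sub-pieces: $H^1(L)=0$ handles the top of the chase, $H^1(B^{\otimes j}\otimes L)=0$ for $j\ge 1$ (automatic since $L$ is non-special and $B$ has non-negative degree) handles the middle layers, and $H^1(B^{-1}\otimes L)=0$---equivalently, the Castelnuovo-Mumford surjectivity of $H^0(L)\otimes H^0(B)\to H^0(L\otimes B)$---closes the chase at the bottom.

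For the inductive step, I take the $m$-th exterior power of the kernel-bundle sequence highlighted in the introduction, obtaining
\begin{equation*}
0\lto\wedge^m M_{p+1,B}\lto\wedge^m\pi_{p+1}^*M_{p,B}\lto\wedge^{m-1}M_{p+1,B}\otimes pr_{p+1}^*(B)\otimes\O\bigl(-\sum_{i=1}^p\Delta_{i,p+1}\bigr)\lto 0,
\end{equation*}
tensor with $T_{p+1}=\bigotimes_{i=1}^{p+1}pr_i^*(L)\otimes\O(-\sum_{1\le i<j\le p+1}\Delta_{i,j})$, and invoke the long exact cohomology sequence. Using the factorization $T_{p+1}=\pi_{p+1}^*(T_p)\otimes pr_{p+1}^*(L)\otimes\O(-\sum_{i=1}^p\Delta_{i,p+1})$ and the projection formula along $\pi_{p+1}$, the cohomology of the middle term is identified with $H^k(C^p,\wedge^m M_{p,B}\otimes T_p\otimes M_{p,L})$, provided $R^j\pi_{p+1,*}(pr_{p+1}^*(L)(-\sum\Delta_{i,p+1}))=0$ for $j\ge 1$---that is, $L(-D)$ is non-special for every effective $D$ on $C$ of degree $\le p$---a condition I would verify from $H^1(B^{-1}\otimes L)=0$ combined with the lower bound on $\deg B$ forced by $p$-very-ampleness. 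The right-hand term, still involving $\wedge^{m-1}M_{p+1,B}$, I would handle by a nested induction on $m$; the base case $m=1$ (where $\wedge^0 M_{p+1,B}=\O$) reduces to explicit line-bundle computations on $C^{p+1}$ amenable to direct diagonal arguments.

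The principal difficulty lies in the middle-term reduction: the inductive hypothesis supplies vanishing for $\wedge^m M_{p,B}\otimes T_p$, but what actually arises is the further tensor product with $M_{p,L}$, a bundle of rank $h^0(L)-p$. I would resolve this by iteratively decomposing $M_{p,L}$ via its own kernel-bundle sequence and absorbing its contribution one factor of $pr_i^*(L)(-\Delta)$ at a time, ensuring that the auxiliary pieces that emerge still fit within the inductive framework. Arranging the combinatorics of these repeated reductions---so that each intermediate cohomology statement continues to satisfy the standing hypotheses $H^1(L)=0$ and $H^1(B^{-1}\otimes L)=0$ (possibly after replacing $L$ by a suitable twist)---is the most delicate part of the argument.
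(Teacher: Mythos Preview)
Your inductive framework is the same as the paper's, but at the key step you run the exterior-power sequence in the wrong direction, and this is precisely what generates the obstacle you call ``the principal difficulty.'' Writing $Q=pr_{p+1}^*(B)\otimes\O(-\sum_{i=1}^p\Delta_{i,p+1})$, you place $\wedge^m M_{p+1,B}$ as a \emph{subbundle} of $\wedge^m\pi_{p+1}^*M_{p,B}$; after tensoring with $T_{p+1}$ the middle term still carries $-\sum_{i=1}^p\Delta_{i,p+1}$, so pushing along $\pi_{p+1}$ produces the extra factor $M_{p,L}$ you cannot absorb into the inductive hypothesis. Your proposed remedy---iteratively unwinding $M_{p,L}$ by its own kernel sequence---only reintroduces further negative-diagonal twists and further copies of $M_{\bullet,L}$; nothing forces this to terminate inside the class of statements you are assuming. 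Separately, your claim that $R^j\pi_{p+1,*}\bigl(pr_{p+1}^*(L)(-\sum\Delta_{i,p+1})\bigr)=0$ for $j\ge 1$ amounts to $H^1(L(-D))=0$ for every effective $D$ of degree $p$, and this does \emph{not} follow from $H^1(L)=0$ and $H^1(B^{-1}\otimes L)=0$ alone.

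The paper's device is to go the other way: from the $(m{+}1)$-st exterior power one has
\[
0\lto\wedge^{m+1}M_{p+1,B}\otimes Q^{-1}\lto\wedge^{m+1}\pi_{p+1}^*M_{p,B}\otimes Q^{-1}\lto\wedge^m M_{p+1,B}\lto 0,
\]
so $\wedge^m M_{p+1,B}$ is a \emph{quotient}, and iterating yields a left resolution with terms $\wedge^{m+l+1}\pi_{p+1}^*M_{p,B}\otimes Q^{-(l+1)}$, $l\ge 0$. Since $Q^{-1}$ contributes $+\sum\Delta_{i,p+1}$, tensoring with $T_{p+1}$ makes the diagonals involving the last factor \emph{cancel} at $l=0$ (clean K\"unneth on $C^p\times C$, with $L\otimes B^{-1}$ on the last factor and the inductive hypothesis on $C^p$) and appear with \emph{positive} coefficient $l$ for $l\ge 1$ (Leray along $pr_{p+1}$ gives fibers involving $L(+lx)$, which trivially inherits $H^1(L)=0=H^1(B^{-1}\otimes L)$). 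No $M_{p,L}$ ever appears, no $L(-D)$ is needed, and the induction closes directly. For the base $p=0$ the paper likewise avoids your Koszul chase: choosing a general two-dimensional subspace of $H^0(B)$ gives a filtration of $M_{1,B}$ with graded pieces $B^{-1}$ and $\O_C$, hence every $\wedge^m M_{1,B}$ is filtered by these, and the vanishing is immediate from the two hypotheses.
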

\begin{proof}
First consider $p=0$: In this case we know that $B$ is globally generated and we have an exact sequence
\begin{equation*}
0\lto M_{1,B} \lto H^0B\otimes\O_C\lto B\lto 0.
\end{equation*}
Replacing $H^0B$ by a general subspace of dimension 2 to define a line bundle $M'_{1,B}$ sitting in an exact sequence
\begin{equation*}
0\lto M'_{1,B}\lto M_{1,B}\lto \oplus \O_C\lto 0,
\end{equation*}
we note that $M'_{1,B}=B^{-1}$, hence $M_{1,B}$ and all its exterior powers have 
a filtration whose components are isomorphic to either $M'_{1,B}$ or $\O_C$. The 
vanishing now follows from the assumptions of $H^1(B^{-1}\otimes L)=0=H^1L$.

For the induction step we need to consider several cases:

{\bf Case 1.} $\rank(M_B)=m$: 
The exact sequence on $C^{p+2}$
\begin{equation*}
0\lto \O\big(-\sum_{i=1}^{p+1}\Delta_{i,p+2}\big) \lto\O\big(-\sum_{i=1}^p\Delta_{i,p+2}\big)\lto
\O\big(-\sum_{i=1}^p\Delta_{i,p+2}\big)\otimes\O_{\Delta_{p+1,p+2}}\lto 0
\end{equation*}
yields, after tensoring with $pr_{p+2}^*(B)$ and applying $\pi_{p+2,*}$, an exact sequence on $C^{p+1}$
\begin{equation}\label{eq8}
0\lto M_{p+1,B} \lto \pi_{p+1}^*(M_{p,B})\lto pr_{p+1}^*(B) \otimes\O\big(-\sum_{i=1}^{p}\Delta_{i,p+1}\big) \lto 0.
\end{equation}

We conclude inductively that
\begin{align*}
\det M_{p+1,B} & =\det (\pi_{p+1}^*(M_{p,B}))\otimes pr_{p+1}^*(B^{-1})\otimes\O\big(\sum_{i=1}^p\Delta_{i,p+1}\big) \\
& =pr_1^*(B^{-1})\otimes\dotsm\otimes pr_{p+1}^*(B^{-1})\otimes\O\big(\sum_{1\le i<j\le p+1}\Delta_{i,j}\big),
\end{align*}
and find
\begin{multline*}
H^k\big(C^{p+1},\det(M_B) \otimes pr_1^*(L)\otimes\dotsm\otimes pr_{p+1}^*(L)
\otimes\O\big(-\sum_{1\le i<j\le p+1}\Delta_{i,j}\big)\big) \\
=H^k\big(C^{p+1}, pr_1^*(L\otimes B^{-1})\otimes\dotsm\otimes 
pr_{p+1}^*(L\otimes B^{-1})\big)=0
\end{multline*}
by K\"unneth for $k>0$.

{\bf Case 2.} $\rank(M_B)>m$: The exterior powers of the sequence (\ref{eq8}) lead to a resolution of
$\wedge^m M_{p+1,B}$ which looks as follows
\begin{align*}
\dotsc
\lto & \wedge^{m+2}\pi_{p+1}^*(M_{p,B})\otimes pr_{p+1}^*(B^{-2})\otimes \O\big(\sum_{i=1}^{p}
\Delta_{i,p+1}\big)^{\otimes 2} \\
\lto & \wedge^{m+1} \pi_{p+1}^*(M_{p,B})\otimes pr_{p+1}^*(B^{-1})\otimes\O\big(\sum_{i=1}^{p}\Delta_{i,p+1}\big)
\lto\wedge^m M_{p+1,B}\lto 0.
\end{align*}
The desired vanishing (\ref{eq7}) will result from the vanishings of
\begin{equation}\label{eq9}
\begin{aligned}
H^{k+l}\big(C^{p+1}, \wedge^{m+l+1}\pi_{p+1}^*(M_{p,B})\ \otimes\ & pr_1^*(L)\otimes\dotsm\otimes pr_p^*(L)
\otimes pr_{p+1}^*(L\otimes B^{-l-1}) \\
& \otimes\O\big(\sum_{i=1}^p \Delta_{i,p+1}\big)^{\otimes(l+1)}
\otimes\O\big(-\sum_{1\le i<j\le p+1}\Delta_{i,j}\big)\big)
\end{aligned}
\end{equation}
for all $l\ge 0$ (see e.g., \cite[B.1.2 (i)]{Laz}).

{\bf Case 2.1.} $l=0$: The diagonals involving the component $p+1$ disappear.
We need to consider
\begin{equation*}
H^{k}\big(C^{p+1},\wedge^{m+1}\pi_{p+1}^*(M_{p,B})\otimes pr_1^*(L)\otimes\dotsm pr_p^*(L)
\otimes pr_{p+1}^*(L\otimes B^{-1}) \otimes\O\big(-\sum_{1\le i<j\le p}\Delta_{i,j}\big)\big)
\end{equation*}
and the vanishing follows from K\"unneth on $C^{p+1}=C^p\times C$, using $H^1(L\otimes B^{-1})=0$ on the 
last component and the induction assumption on $C^p$ for the tensor product of the remaining factors.

{\bf Case 2.2.} $l>0$:
Diagonals involving the $(p+1)$-st component now appear with a positive multiplicity $l$.
We wish to use the Leray spectral sequence for $pr_{p+1,*}$. We have to consider $H^1\circ R^{k+l-1}pr_{p+1,*}$ and
$H^0\circ R^{k+l}pr_{p+1,*}$.

The fiber of $R^{k+l-1}pr_{p+1,*}$ of the sheaf in (\ref{eq9}) over a fixed point $x\in C$ consists of
\begin{equation}\label{eq10}
H^{k+l-1}\big(C^p,\wedge^{m+l+1} M_{p,B}\otimes pr_1^*(L+lx)\otimes\dotsm pr_p^*(L+lx)
\otimes\O\big(-\sum_{1\le i<j\le p}\Delta_{i,j}\big)\big).
\end{equation}
As the vanishings of $H^1L$ and $H^1(B^{-1}\otimes L)$ imply the same 
for $L+lx$ in place of $L$, the cohomology group (\ref{eq10}) vanishes by the induction assumption
(note that $k+l-1\ge 1$).

Analogously, the same can be done for the fiber of $R^{k+l}pr_{p+1,*}$ of the sheaf in (\ref{eq9}).
\end{proof}

If the degree of $B$ is large enough, then we can employ a suitable filtration of $M_B$
and obtain a stronger vanishing result. Note that any line bundle of degree
$\ge 2g+p$ is $p$-very ample.

\begin{proposition}\label{prop32}
If $\deg B\ge 2g+2p+1$ and $\deg L\ge 2g+2p$\textup{,} then the vanishing \textup{(\ref{eq6})} holds\textup{,} 
i.e.\textup{,}
\begin{equation*}
H^1\big(C^{p+1}, M_B\otimes pr_1^*(L)\otimes\dotsm \otimes pr_{p+1}^*(L)
\otimes\O(-\sum_{1\le i<j\le p+1}\Delta_{i,j})\big)=0.
\end{equation*}
\end{proposition}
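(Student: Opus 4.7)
The plan is to prove the vanishing by induction on $p$, with the filtration
\begin{equation*}
0\lto M_{B(-x)}\lto M_B\lto \O(-x,\dotsc,-x)\lto 0
\end{equation*}
as the driving tool. This sequence is exact for every $x\in C$ under our hypothesis, since $\deg B\ge 2g+2p+1$ forces $\deg B(-x)\ge 2g+2p\ge 2g+p$ and hence both $B$ and $B(-x)$ are $p$-very ample. Writing $T:=pr_1^*(L)\otimes\dotsm\otimes pr_{p+1}^*(L)\otimes\O(-\sum_{1\le i<j\le p+1}\Delta_{i,j})$, tensoring with $T$ and taking the long exact sequence reduces the goal to establishing the pair
\begin{equation*}
H^1(M_{B(-x)}\otimes T)=0\quad\text{and}\quad H^1(\O(-x,\dotsc,-x)\otimes T)=0.
\end{equation*}

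To dispose of the second, I observe that $\O(-x,\dotsc,-x)\otimes T=r^*N_{L(-x)}$ and apply Leray to $\pi_{p+1}\colon C^{p+1}\to C^p$. The $R^1\pi_{p+1,*}$ is fiberwise $H^1$ of a line bundle on $C$ of degree $\deg L-(p+1)\ge 2g-1$, so it vanishes. By the projection formula, $\pi_{p+1,*}(r^*N_{L(-x)})$ identifies with $r_p^*N_{L(-x)}\otimes M_{p,L(-x)}$ on $C^p$; its $H^1$ is exactly the content of Proposition \ref{prop32} at level $p-1$ with $B'=L'=L(-x)$. The required bounds $\deg L(-x)\ge 2g+2(p-1)+1$ (for $B'$) and $\deg L(-x)\ge 2g+2(p-1)$ (for $L'$) both follow from $\deg L\ge 2g+2p$, so the inductive hypothesis closes this step.

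For the first vanishing I would iterate the filtration by subtracting further general points $x_2,x_3,\dotsc$ from $B(-x)$, which remains valid while the degree stays $\ge 2g+p$. Each additional quotient $\O(-x_i,\dotsc,-x_i)\otimes T$ has vanishing $H^1$ by the Leray argument above, so by descending induction on $\deg B$ the problem reduces to the residual $H^1(M_{B_0}\otimes T)=0$ for a bundle $B_0$ of degree as low as $2g+p$. This residual vanishing is the main obstacle: Theorem \ref{thm31} applies only when $\deg L-\deg B_0\ge 2g-1$, which our hypotheses ensure only for $p\ge 2g-1$. For smaller $p$ I would combine the kernel-bundle exact sequence
\begin{equation*}
0\lto M_{p+1,B_0}\lto \pi_{p+1}^*M_{p,B_0}\lto pr_{p+1}^*(B_0)\otimes\O\bigl(-\sum_{i=1}^p\Delta_{i,p+1}\bigr)\lto 0
\end{equation*}
from the proof of Theorem \ref{thm31} with the inductive hypothesis of Proposition \ref{prop32} at level $p-1$, reducing to cohomology on $C^p$ where the available degree hypotheses suffice. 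The base case $p=0$ of the outer induction is the classical surjectivity of the multiplication $H^0(B)\otimes H^0(L)\to H^0(B\otimes L)$ under $\deg B\ge 2g+1$ and $\deg L\ge 2g$, which follows from Butler's semistability of $M_L$.
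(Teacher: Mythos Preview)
Your overall strategy---filter $M_B$ by subtracting points, handle each quotient $\O(-x,\dotsc,-x)\otimes T$ via Leray and the inductive hypothesis on $p$, and then deal with the residual subbundle---matches the paper's approach, and your treatment of the quotient pieces is correct. The gap is in the residual step.

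You stop the filtration at $B_0$ of degree $2g+p$ because that is the threshold at which \emph{every} line bundle is $p$-very ample, and you rightly note that Theorem~\ref{thm31} does not apply there since $\deg(L\otimes B_0^{-1})\le p$. Your suggested remedy via the kernel-bundle sequence is not actually carried out, and it runs into trouble: tensoring that sequence with $T$ produces terms in which the diagonals $\Delta_{i,p+1}$ appear with coefficient $-2$, and neither the induction hypothesis of this proposition nor the argument of Theorem~\ref{thm31} controls such terms. So as written the proof is incomplete.

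The missing idea is that the filtration can be pushed further by passing to a \emph{general} line bundle of lower degree. A general line bundle $D$ of degree $g+2p+1$ is non-special and $p$-very ample; since $\deg(B\otimes D^{-1})\ge(2g+2p+1)-(g+2p+1)=g$, there is a nonzero homomorphism $D\to B$, yielding a filtration of $M_B$ with bottom piece $M_D$ and successive quotients of the shape $\O(-z_i,\dotsc,-z_i)$. The quotients are handled exactly as you do. For the bottom piece one now has $\deg(L\otimes D^{-1})\ge(2g+2p)-(g+2p+1)=g-1$, so for general $D$ one gets $H^1(L\otimes D^{-1})=0$, and Theorem~\ref{thm31} applies directly to give $H^1(M_D\otimes T)=0$. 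In short, replacing ``$B_0$ of degree $2g+p$, arbitrary'' by ``$D$ of degree $g+2p+1$, general'' dissolves the obstacle you identified.
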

\begin{proof}
{\bf Step 1.}
The proof of \cite[I\!V 6.1]{AG} can be adapted to show that the general line bundle $D$ of 
degree $g+2p+1$ is non-special and $p$-very ample. Here ``general'' means ``after 
excluding a finite number of lower-dimensional subschemes''
of the variety of line bundles of this degree.

{\bf Step 2.}
As $\deg( B\otimes D^{-1})\ge (2g+2p+1)-(g+2p+1)=g$, there is a non-zero homomorphism 
$D\lto B$ vanishing in a finite set of points $z_i$ (possibly with higher multiplicities).

If all points $z_i$ have multiplicity $1$, we obtain an exact sequence
\begin{equation}\label{eq10a}
0\lto M_{p+1,D} \lto M_{p+1,B} \lto \oplus_i \O(-z_i,\dotsc,-z_i)\lto 0
\end{equation}
and the required vanishing will follow from
\begin{equation}\label{eq11}
H^1\big(C^{p+1}, M_{p+1,D}\otimes pr_1^*L\otimes\dotsm\otimes pr_{p+1}^*L\otimes\O\big(-\sum_{1\le i<j\le p+1}\Delta_{i,j}\big)\big)=0.
\end{equation}
and
\begin{equation}\label{eq12}
H^1 \big(C^{p+1},L(-z_i)\otimes\dotsm L(-z_i)\otimes\O\big(-\sum_{1\le i<j\le p+1}\Delta_{i,j}\big)\big)=0
\end{equation}

If there are points with higher multiplicity, then the sheaf on the right-hand side of (\ref{eq10a}) may not split 
into a direct sum, but will have a filtration whose graded pieces have the form (\ref{eq12}).

{\bf Step 3.} As $\deg (L\otimes D^{-1})\ge 2g+2p-(g+2p+1)=g-1$, we have $H^1(L\otimes D^{-1})=0$ for
general $D$, and the vanishing (\ref{eq11}) follows from (\ref{thm31}).

The vanishing (\ref{eq12}) follows by induction (use (\ref{prop32}) for $p-1$) after 
applying $\pi_{p+1,*}$, given that $\deg L(-z_i)\ge 2g+2p-1= 2g+2(p-1)+1$.
\end{proof}

A result for even lower degree of $B$ is possible, and requires $\deg L$ to be higher by $1$.

\begin{proposition}\label{prop36}
If $\deg B\ge 2g+p+1$ and $\deg L\ge 2g+2p+1$\textup{,} then the vanishing 
\textup{(\ref{eq6})} holds\textup{,} i.e.\textup{,}
\begin{equation*}
H^1\big(C^{p+1},M_B\otimes(\otimes_{i=1}^{p+1}(pr_i^*(L))\otimes\O(-\sum_{1\le i<j\le p+1}\Delta_{i,j})\big)=0
\end{equation*}
\end{proposition}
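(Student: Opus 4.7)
The plan is to follow the template of Proposition~\ref{prop32}, with the crucial modification of performing one step of the filtration $M_B \twoheadrightarrow \O(-x,\dotsc,-x)$ before invoking the auxiliary line bundle trick. Concretely, for a general point $x \in C$ I would use the exact sequence on $C^{p+1}$
\begin{equation*}
0 \lto M_{B(-x)} \lto M_B \lto \O(-x,\dotsc,-x) \lto 0,
\end{equation*}
available because $\deg B(-x) \ge 2g+p$ forces $B(-x)$ to be $p$-very ample. Tensoring with $\otimes_{i=1}^{p+1} pr_i^*(L) \otimes \O(-\sum_{i<j}\Delta_{ij})$ and taking cohomology reduces the vanishing (\ref{eq6}) to two sub-vanishings: one for $M_{B(-x)} \otimes \bigl(\otimes pr_i^*L\bigr) \otimes \O(-\sum \Delta_{ij})$ (the kernel factor), and one for $\otimes pr_i^*(L(-x)) \otimes \O(-\sum \Delta_{ij})$ (the quotient, after twisting).

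The quotient sub-vanishing I would handle by push-forward under $\pi_{p+1}$: since $\deg L(-x) = 2g+2p$, higher pushforwards vanish and $L(-x)$ is $(p-1)$-very ample, so one obtains $M_{p,L(-x)} \otimes \bigl(\otimes_{i=1}^{p} pr_i^*(L(-x))\bigr) \otimes \O(-\sum_{i<j\le p}\Delta_{ij})$ on $C^p$. This is the shape of (\ref{eq6}) for $p-1$ with $B = L = L(-x)$, and the degree inequalities $2g+2p \ge 2g+p$ and $2g+2p \ge 2g+2p-1$ are immediate, so the vanishing follows by induction on $p$.

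For the kernel sub-vanishing, I would import the auxiliary-bundle argument of Proposition~\ref{prop32} applied to $B(-x)$ in place of $B$. I would choose a generic line bundle $D$ of degree $g+2p+1$ inside the translate $B(-x) - C_{g-p-1} \subset \Pic$. A dimension count analogous to Step~1 of Proposition~\ref{prop32} shows that such a $D$ is non-special and $p$-very ample, and further satisfies $H^1(L \otimes D^{-1}) = 0$: here $L - D = L - B + x + E$ (with $E$ the effective divisor cutting out $D \hookrightarrow B(-x)$) has degree $\ge g$, and the obstruction $h^0(K_C - L + D)$, having degree $g - 2p - 3$, defines a positive-codimension locus in the parameter space $C_{g-p-1}$ that a generic $E$ avoids. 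The inclusion $D \hookrightarrow B(-x)$ then yields a sequence
\begin{equation*}
0 \lto M_{p+1,D} \lto M_{p+1,B(-x)} \lto \bigoplus_z \O(-z,\dotsc,-z) \lto 0,
\end{equation*}
whose left term has vanishing $H^1$ (against our twisting sheaf) by Theorem~\ref{thm31} applied to $D$, and whose right summands have vanishing $H^1$ by the same inductive push-forward argument used for the quotient sub-vanishing above.

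The main obstacle is the existence of a suitable $D$: the three conditions (non-special, $p$-very ample, and $H^1(L - D) = 0$) each cut out an open subset of the $(g-p-1)$-dimensional family $B(-x) - C_{g-p-1}$, and one must check that their intersection is non-empty for every $(L,B)$ satisfying the hypotheses. This works cleanly while $p \le g - 1$, but for $p \ge g$ the family is empty and one must instead verify the hypotheses of Theorem~\ref{thm31} for $B$ itself, exploiting that $\deg(K_C - L + B) \le 2g - p - 2$ is small enough to make $h^0(K_C - L + B) = 0$ for generic $(L,B)$ in the given degree range.
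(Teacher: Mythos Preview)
Your approach differs from the paper's and has a genuine gap. The paper does not filter $M_{p+1,B}$. Instead it first uses a Leray-spectral-sequence identity on $C^{p+2}$ to exchange the roles of $B$ and $L$,
\[
H^1\!\Bigl(M_{p+1,B}\otimes\bigotimes_i pr_i^*L\otimes\O(-\textstyle\sum\Delta_{i,j})\Bigr)
\;\cong\;
H^1\!\Bigl(M_{p+1,L}\otimes pr_1^*B\otimes\bigotimes_{i\ge 2} pr_i^*L\otimes\O(-\textstyle\sum\Delta_{i,j})\Bigr),
\]
and only then filters $M_{p+1,L}$ down to $M_{p+1,D}$ as in Proposition~\ref{prop32}. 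Because $\deg L-\deg D\ge g$, the effective divisors realizing $D\hookrightarrow L$ sweep out all of $\Pic$, so $D$ is a \emph{genuinely general} line bundle of degree $g+2p+1$ and the conditions (non-special, $p$-very ample, $H^1(L\otimes D^{-1})=0$) are automatic. The residual $H^1$, now carrying the asymmetric twist $pr_1^*B$, is handled by a separate induction combining a resolution step with a filtration step along a chain $D_0,\dotsc,D_p$; it is there that the bound $\deg B\ge 2g+p+1$ finally enters.

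In your scheme $D$ is confined to the translate $B(-x)-C_{g-p-1}\subset\Pic^{g+2p+1}$, a subvariety of dimension only $g-p-1$, and the condition $H^1(L\otimes D^{-1})=0$ can fail on the entire family. Take $p\ge 2$, $\deg B=2g+p+1$, $\deg L=2g+2p+1$, and choose $L,B$ so that $L\otimes B^{-1}$ is a pencil of degree $p$ (possible whenever $\gon(C)\le p$; the proposition makes no gonality hypothesis). Then $h^0(K-L+B)=(g-p-1)+h^0(L-B)\ge g-p+1$, hence $h^0(K-L+B-x)\ge g-p$ for every $x$, and any $g-p-1$ points of $C$ lie on some member of $|K-L+B-x|$. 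Thus $K-L+D=(K-L+B-x)-E$ is effective for \emph{every} $x$ and every $E\in C_{g-p-1}$, so $H^1(L\otimes D^{-1})\ne 0$ throughout and Theorem~\ref{thm31} is never applicable. (Your figure $\deg(K-L+D)=g-2p-3$ is also off; at the boundary the degree is $g-2$, which is exactly why the dimension count is delicate.) The swap to $M_{p+1,L}$ is precisely what removes this obstruction.
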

\begin{proof}[Sketch of proof]
{\bf Step 1.}
Successive applications of Leray's spectral sequence show that 
\begin{align*}
H^1 \big(C^{p+1},\ & M_{p+1,B} \otimes pr_1^*(L)\otimes \dotsm\otimes pr_{p+1}^*(L)\otimes 
\O\big(-\sum_{1\le i<j\le p+1}\Delta_{i,j}\big)\big) \\
\cong \enskip & H^1 \big(C^{p+2},pr_1^*(B)\otimes pr_2^*(L)\otimes \dotsm\otimes pr_{p+2}^*(L)\otimes
\O\big(-\sum_{1\le i<j\le p+2}\Delta_{i,j}\big)\big) \\
\cong \enskip & H^1 \big(C^{p+1},M_{p+1,L} \otimes pr_1^*(B)\otimes\dotsm\otimes pr_{p+1}^*(L)\otimes
\O\big(-\sum_{1\le i<j\le p+1}\Delta_{i,j}\big)\big)
\end{align*}
By filtering $M_{p+1,L}$ instead of $M_{p+1,B}$ as in the proof of (\ref{prop32}), we can 
reduce the question to the vanishing of
\begin{equation}\label{eq12a}
H^1 \big(C^{p+1},M_{p+1,D} \otimes pr_1^*(B)\otimes pr_2^*(L)\otimes\dotsm\otimes pr_{p+1}^*(L)\otimes
\O\big(-\sum_{1\le i<j\le p+1}\Delta_{i,j}\big)\big)
\end{equation}
where $D$ is a general line bundle of degree $g+2p+1$.

{\bf Step 2.}
The same steps as in the proof of (\ref{thm31}) can be followed to show that
\begin{equation*}
H^k \big(C^{p+1},\wedge^m M_{p+1,D} \otimes pr_1^*(B)\otimes pr_2^*(L)\otimes\dotsm\otimes pr_{p+1}^*(L)\otimes
\O\big(-\sum_{1\le i<j\le p+1}\Delta_{i,j}\big)\big)=0
\end{equation*}
for all $k\ge 2$, $m>0$, as long as $H^1L=0=H^1(L\otimes D^{-1})$, no assumption on $B$.

{\bf Step 3.} There exist line bundles $D_0,\dotsc,D_p=D$ on $C$ with the following properties:
\begin{itemize}
\item[(i)]
$D_i$ is non-special of degree $g+1+2i$ and $i$-very ample
\item[(ii)]
$H^0(C,D_{i+1}\otimes D_{i}^{-1})$ contains a nonzero section vanishing in two points $x_i, y_i$.
\end{itemize}

{\bf Step 4.}
The vanishing of (\ref{eq12a}) now follows from the case $m=1$ of the next claim:

If $D_p$ is general of degree $\deg(D_p)=g+2p+1$,  
$H^1L=0=H^1(L\otimes D_p^{-1})$ and 
\begin{equation*}
\deg(B)\ge \deg(D_p)-p+m+g-1,
\end{equation*}
then we have
\begin{equation*}
H^1 \big(C^{p+1},\wedge^m M_{p+1,D_p} \otimes pr_1^*(B)\otimes pr_2^*(L)\otimes\dotsm\otimes pr_{p+1}^*(L)\otimes
\O\big(-\sum_{1\le i<j\le p+1}\Delta_{i,j}\big)\big)=0.
\end{equation*}

The induction uses a resolution step  (as in \ref{thm31}), followed by a filtration step
(as in \ref{prop32}). We leave the details to the reader.
\end{proof}

\begin{remark}
Both \textup{(\ref{prop32})} and \textup{(\ref{prop36})} would follow from a more general vanishing result for
\begin{equation*}
H^1 \big(C^n,pr_1^*(L_1)\otimes\dotsm\otimes pr_n^*(L_n)\otimes\O\big(-\sum_{1\le i<j\le n}\Delta_{i,j}\big)\big).
\end{equation*}
It seems likely that it is sufficient to assume that $\deg L_i\ge 2g-3+n+i$; under this condition, 
the vanishing is known for $n\le 3$.
\end{remark}

\section{Additional vanishing results for the canonical bundle}

The filtration approach of \ref{prop32} depends on finding a $p$-very ample line bundle $D$
and a nonzero homomorphism $D\lto B$ with the property that $h^0D=h^0B-(\deg B-\deg D)$
(equivalent to $h^1D=h^1B$). In
geometric terms, the embedding of $C$ by $H^0D$ is the result of a sequence of inner projections
of the embedding by $H^0B$ from the points of a nonzero divisor in $H^0(B\otimes D^{-1})$.

If no such $D$ exists, we obtain information on the geometry of the embedding by $H^0B$.
In the case $B=K$, this information can be interpreted in terms of the varieties of special divisors on $C$.

\begin{proposition}\label{prop38}
Suppose $B$ is $p$-very ample\textup{,} and there exists a $p$-very ample line bundle 
$D$ of degree $\deg(B)-k$\textup{,} and a non-zero homomorphism $D\lto B$ which is 
an isomorphism on $H^1$\textup. If $\deg L\ge 2g+2p$ and 
$\deg L\ge 2g-1+\deg B-2k$\textup{,} then the vanishing \textup{(\ref{eq6})} holds\textup{,}
i.e.\textup{,}
\begin{equation*}
H^1\big(C^{p+1}, M_B\otimes pr_1^*(L)\otimes\dotsm \otimes pr_{p+1}^*(L)
\otimes\O(-\sum_{1\le i<j\le p+1}\Delta_{i,j})\big)=0.
\end{equation*}
\end{proposition}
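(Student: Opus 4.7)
The plan is to follow the template of Proposition~\ref{prop32}, exploiting the given line bundle $D$ in place of the generic one chosen there. By hypothesis, $D\to B$ is injective with cokernel $\O_Z$ for an effective divisor $Z$ of degree $k$, and the $H^1$-isomorphism condition is equivalent to the surjectivity of $H^0(B)\twoheadrightarrow H^0(\O_Z)$. Pulling the sequence $0\to D\to B\to \O_Z\to 0$ back to $C^{p+2}$ by $pr_{p+2}$, tensoring with $\O(-\sum_{i=1}^{p+1}\Delta_{i,p+2})$, and applying $\pi_{p+2,*}$ produces on $C^{p+1}$ a short exact sequence
\begin{equation*}
0 \lto M_{p+1,D} \lto M_{p+1,B} \lto \mathcal{F} \lto 0,
\end{equation*}
where $\mathcal{F}$ carries a filtration with graded pieces $\O(-z,\dotsc,-z)$, one for each point $z$ in $Z$ (counted with multiplicity). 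The $p$-very ampleness of $D$ is what ensures the fibrewise vanishing of $R^1\pi_{p+2,*}$ of the first term and hence the exactness of the push-forward.

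Tensoring this sequence with $\bigl(\otimes_{i=1}^{p+1} pr_i^*(L)\bigr)\otimes \O(-\sum_{1\le i<j\le p+1}\Delta_{i,j})$ and taking the associated long exact sequence reduces the target vanishing~(\ref{eq6}) to two independent sub-vanishings: one for each graded piece of $\mathcal{F}$, and one with $M_{p+1,B}$ replaced by $M_{p+1,D}$. The diagonal pieces are dispatched exactly as in Proposition~\ref{prop32}: applying $\pi_{p+1,*}$ reduces each to the $(p-1)$ case of Proposition~\ref{prop32} with $L(-z)$ in place of $L$, and the first hypothesis $\deg L\ge 2g+2p$ is exactly what gives $\deg L(-z)\ge 2g+2(p-1)+1$.

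The serious obstacle is the vanishing for $M_{p+1,D}$: a direct appeal to Theorem~\ref{thm31} would demand $H^1(L\otimes D^{-1})=0$, i.e.\ $\deg L\ge 2g-1+\deg D = 2g-1+\deg B-k$, whereas the hypothesis provides only $\deg L\ge 2g-1+\deg B-2k$, a deficit of exactly $k$. To bridge this gap I would invoke the Leray-style identity used in Step~1 of the proof of Proposition~\ref{prop36} to transport the cohomology of $M_{p+1,D}$ into
\begin{equation*}
H^1\bigl(C^{p+1},\, M_{p+1,L} \otimes pr_1^*(D)\otimes pr_2^*(L)\otimes\dotsm\otimes pr_{p+1}^*(L)\otimes \O(-\sum_{1\le i<j\le p+1}\Delta_{i,j})\bigr),
\end{equation*}
and then filter $M_{p+1,L}$ by a suitable $p$-very ample line bundle $D'$ of low degree, treating the resulting quotient pieces by the same $\pi_{p+1,*}$-plus-induction device as above. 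I expect the second hypothesis $\deg L\ge 2g-1+\deg B-2k$ to enter precisely in the Serre-duality check certifying that the deepest kernel of this second filtration lies in the range covered by the generalised form of Theorem~\ref{thm31} appearing in Step~2 of Proposition~\ref{prop36}. The technical heart of the argument will be dovetailing the two filtrations so that every diagonal quotient term vanishes without eating into the slack of either degree hypothesis.
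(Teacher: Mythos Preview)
You correctly set up the filtration of $M_{p+1,B}$ by $M_{p+1,D}$ and correctly isolate the obstacle: Theorem~\ref{thm31} applied to the $M_{p+1,D}$ term needs $H^1(L\otimes D^{-1})=0$, which the hypotheses do not guarantee for the \emph{particular} $D$ handed to you. But your proposed remedy --- passing to $M_{p+1,L}$ via the Leray identity of Proposition~\ref{prop36} and running a second filtration --- is speculative and far heavier than what is required. You never check that the bound $\deg L\ge 2g-1+\deg B-2k$ actually feeds into the Step~4 numerology of Proposition~\ref{prop36}; that step constrains the degree of the bundle in the $pr_1^*$ slot (here $D$), not $\deg L$, and it is not at all clear the numbers match.

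The missing idea is much simpler. The two conditions singling out $D$ --- that $B(-x)$ be $p$-very ample and that $B(-x)\to B$ induce an isomorphism on $H^1$ --- are \emph{open} in $x\in C$. Hence once one such $D$ of degree $\deg B-k$ exists, the same holds for $D=B(-E)$ with $E$ a \emph{general} effective divisor of degree $k$, and you are free to choose $E$. A dimension count then finishes the job: effective divisors $E$ of degree $k$ form a $k$-dimensional family, while line bundles of degree $\deg L-\deg B+k$ with nonvanishing $H^1$ lie in a locus of dimension at most $2g-2-(\deg L-\deg B+k)$. As soon as $k$ exceeds the latter, i.e.\ precisely when $\deg L\ge 2g-1+\deg B-2k$, a general $E$ satisfies $H^1\bigl(L\otimes(B(-E))^{-1}\bigr)=0$ and Theorem~\ref{thm31} applies directly. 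The factor $2$ in front of $k$ is thus a dimension-count artefact, not the residue of a double filtration.
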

\begin{proof}
Let $x$ be a point on $C$. The two conditions ``$B(-x)$ is $p$-very ample'' and ``$B(-x)\to B$ 
is an isomorphism on $H^1$'' are both open in $C$. Accordingly our assumptions still hold, if 
we replace $D$ by $B\otimes\O(-E)$ for a general effective divisor $E$ of degree $\deg(B)-\deg(D)$. 

Going back to the proof of proposition (\ref{prop32}), all that needs to be shown is that there exists an effective 
divisor $E$ such that $H^1\big(L\otimes (B\otimes\O(-E))^{-1}\big)=0$. 

Now the variety of such effective divisors $E$ has dimension $k$, whereas the variety of line bundles 
of degree $\deg (L\otimes B^{-1}\otimes\O(E))$ with nontrivial $H^1$ has dimension 
$2g-2-\big(\deg(L)-\deg(B)+k\big)$.

Therefore we are assured of a line bundle with vanishing $H^1$ as long as
$k>2g-2-(\deg(L)-\deg(B)+k)$, i.e., $\deg L>2g-2+\deg(B)-2k$.
\end{proof}

\begin{corollary}\label{cor39}
Suppose there exists a $p$-very ample special line bundle $D$ of degree $g+2p$ with $\dim H^1D=1$. 
Then $K_{p,1}(C,K;L)=0$ for any line bundle $L$ with $\deg L\ge 2g+4p+1$.
\end{corollary}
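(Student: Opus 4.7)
The plan is to reduce the corollary to a direct application of Proposition~\ref{prop38} with $B = K_C$ and the given $D$. Set $k := \deg K - \deg D = (2g-2) - (g+2p) = g - 2p - 2$. The first point to verify is that the numerical hypothesis $\deg L \ge 2g+4p+1$ is exactly what \ref{prop38} demands in this setup: the inequality $\deg L \ge 2g+2p$ is clearly implied, while
\begin{equation*}
2g - 1 + \deg K - 2k = 2g - 1 + (2g-2) - 2(g - 2p - 2) = 2g + 4p + 1,
\end{equation*}
so the second inequality in \ref{prop38} is sharp. The corollary is thus the numerical specialization of \ref{prop38} to $B = K$ under the given assumption on $k$.

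Next I would produce the homomorphism $D \to K$. By Serre duality, $h^0(K \otimes D^{-1}) = h^1(D) = 1$, so there is a unique (up to scalar) nonzero map $D \to K$, cut out by a section $t_E \in H^0(\O(E))$, where $E$ is the effective divisor of degree $k$ determined by $K \cong D(E)$. To see this map induces an isomorphism on $H^1$, note that $H^1(D)$ and $H^1(K)$ are both one-dimensional, so nonvanishing suffices; the Serre-dual map $H^0(\O_C) \to H^0(\O(E))$ sends $1$ to $t_E$ and is visibly nonzero.

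The one step requiring a brief argument is that $K$ itself is $p$-very ample, a standing hypothesis of \ref{prop38}. By Serre duality and Riemann--Roch, $K$ is $p$-very ample if and only if $h^0(\O(\xi)) = 1$ for every effective $\xi$ of degree $p+1$, i.e.\ iff $\gon(C) \ge p+2$. Using $h^1(D) = 1$, the analogous computation for $D$ (where $h^1(D-\xi) = h^0(\O(E+\xi))$ by Serre duality) shows that $p$-very ampleness of $D$ is equivalent to $h^0(\O(E+\xi)) = 1$ for every such $\xi$. Multiplication by $t_E$ yields the inclusion $\O(\xi) \hookrightarrow \O(E+\xi)$, whence $h^0(\O(\xi)) \le h^0(\O(E+\xi)) = 1$, so the $p$-very ampleness of $D$ transfers automatically to $K$. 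With every hypothesis of \ref{prop38} in place, the conclusion $K_{p,1}(C, K; L) = 0$ follows. I do not foresee any real obstacle: the substance is entirely an arithmetic match plus the one-line transfer of $p$-very ampleness from $D$ to $K$ via the factorization $K = D(E)$.
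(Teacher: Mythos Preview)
Your proof is correct and follows the same approach as the paper: reduce to Proposition~\ref{prop38} with $B=K$, compute $k=g-2p-2$, and check that $2g-1+\deg K-2k=2g+4p+1$. You are in fact more careful than the paper, which omits the verifications that $K$ is $p$-very ample and that $D\to K$ is an isomorphism on $H^1$; your argument for the former via $h^0(\O(\xi))\le h^0(\O(E+\xi))=1$ is clean, and for the latter one could alternatively note that the long exact sequence of $0\to D\to K\to\O_E\to 0$ forces $H^1(D)\to H^1(K)$ to be surjective, hence an isomorphism between one-dimensional spaces.
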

\begin{proof}
In order to apply \ref{prop38}, we need
$\deg L\ge 2g-1+\deg K-2(\deg K-\deg D)=2g+4p+1$. 
The homomorphism $D\to K$ is due to $\Hom(D,K)\cong H^0(K\otimes D^{-1})\cong H^1(D)^\vee$.
\end{proof}

\begin{proposition}\label{prop310}
Let $C$ be a curve of genus $g$\textup{,} $p\ge 1$\textup{,} $1\le k\le g-2-2p$.
The following conditions are equivalent\textup{:}
\begin{itemize}
\item[\textup{(i)}] There exists a $p$-very ample line bundle $D$ of degree $2g-2-k$ with $\dim H^1D=1$.
\item[\textup{(ii)}] $\dim W^1_{p+j+2} <j$ for all $0\le j\le k-1$.
\end{itemize}
\end{proposition}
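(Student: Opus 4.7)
The plan is to translate both conditions into statements about Brill--Noether loci in $\Pic(C)$ and carry out a dimension count on a natural incidence variety. First, by Serre duality a line bundle $D$ of degree $2g-2-k$ with $h^1(D)=1$ has the form $D=K\otimes\O(-E)$ for a unique effective divisor $E$ of degree $k$ with $h^0(\O(E))=1$. For $Z\in C_{p+1}$ one has $h^1(D(-Z))=h^0(\O(E+Z))$; combined with Riemann--Roch this shows that $D$ is $p$-very ample if and only if $h^0(\O(E+Z))=1$ for every $Z\in C_{p+1}$. Translated to $\Pic^k(C)$, condition (i) becomes
\begin{equation*}
W^0_k\not\subseteq W^1_{k+p+1}-W^0_{p+1}.
\end{equation*}

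Next, I would study the incidence variety
\begin{equation*}
\mathcal{I}=\{(\Gamma,\mu)\in W^0_k\times W^0_{p+1}\;:\;\Gamma+\mu\in W^1_{k+p+1}\},
\end{equation*}
stratified by $\mathcal{I}_r=\{(\Gamma,\mu)\;:\;\Gamma+\mu\in W^r_{k+p+1}\}$ for $r\ge 1$. Since $W^0_k$ is irreducible of dimension $k$, (i) holds whenever $\dim\mathcal{I}<k$, while (i) fails exactly when the first projection $\mathcal{I}\to W^0_k$ is surjective (which in particular forces $\dim\mathcal{I}\ge k$). The map $\mathcal{I}_r\to W^r_{k+p+1}$, $(\Gamma,\mu)\mapsto\Gamma+\mu$, has fibers of dimension at most $r=\dim|L|$: a fiber over $L$ parametrizes (modulo linear equivalence) the finitely many decompositions $D=E+Z$ of divisors $D\in|L|$ with $\deg E=k$, $\deg Z=p+1$. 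Hence $\dim\mathcal{I}_r\le\dim W^r_{k+p+1}+r$.

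The key Brill--Noether input is the Martens-type inequality $\dim W^r_d\le\dim W^1_{d-r+1}-(r-1)$, obtained by iterating the injective map $W^r_d\times C\hookrightarrow W^{r-1}_{d-1}\subset\Pic^{d-1}$, $(L,x)\mapsto L(-x)$ (injective since $g\ge 1$). Substituting, and setting $j=k-r$ (so $r\in[1,k]$ corresponds to $j\in[0,k-1]$ and $k+p+2-r=p+j+2$), the two bounds combine to
\begin{equation*}
\dim\mathcal{I}_r\le\dim W^1_{p+j+2}+1.
\end{equation*}

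The equivalence now follows from a short analysis of each direction. If (ii) holds, then for $r\in[1,k]$ one has $\dim\mathcal{I}_r\le j<k$; for $r>k$ the chain of Martens inequalities combined with $W^1_{p+2}=\emptyset$ (which propagates to $W^1_{d'}=\emptyset$ for all $d'\le p+2$, since $L\in W^1_{d'}$ gives $L(x)\in W^1_{d'+1}$) forces $W^r_{k+p+1}=\emptyset$, so $\mathcal{I}_r=\emptyset$; thus $\dim\mathcal{I}<k$ and (i) holds. Conversely, if (i) fails then some $\mathcal{I}_r$ has dimension $\ge k$: for $r\in[1,k]$ the upper bound forces $\dim W^1_{p+j+2}\ge k-1\ge j$ with $j=k-r$, violating (ii); for $r>k$ the chain yields a nonempty $W^1_{p+1}$, hence a nonempty $W^1_{p+2}$ by adding a point, again violating (ii) at $j=0$. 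The main obstacle will be the careful fiber-dimension analysis of $\mathcal{I}_r\to W^r_{k+p+1}$, especially on strata where divisors may be non-reduced or the splittings can coincide after passing to linear equivalence.
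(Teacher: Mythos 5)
Your reduction of condition (i) to the statement $W^0_k\not\subseteq W^1_{k+p+1}-W^0_{p+1}$ is correct, and the incidence-variety count is a genuinely different organization from the paper's proof, which never leaves the $W^1$'s: there, both implications are run by successively imposing base points $x_1,\dots,x_k$ on the subvarieties of $W^1_{p+j+2}$ and tracking how their dimensions drop (or fail to drop), so the only input is the elementary fact that a positive-dimensional family of pencils acquires a prescribed base point in codimension at most one. Your route instead has to control all the higher loci $W^r_{k+p+1}$, and that is exactly where the one genuine gap sits. The map $W^r_d\times C\to W^{r-1}_{d-1}$, $(L,x)\mapsto L(-x)$, is \emph{not} injective, and ``$g\ge 1$'' does not rescue it: whenever $W^r_{d-1}\ne\emptyset$, any $M\in W^r_{d-1}$ and any two points $x\ne x'$ give distinct pairs $(M(x),x)$ and $(M(x'),x')$ in $W^r_d\times C$ with the same image $M$; and even when $W^r_{d-1}=\emptyset$ the fiber over $M$ is the base locus of $|K-M|$, which is finite but need not be a single point. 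So the inequality $\dim W^r_d\le\dim W^{r-1}_{d-1}-1$ does not follow from the argument you give.

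The inequality itself is true in the range you need (every bundle in sight is special, since $k+p+1\le g-p-1$), but proving it requires the classical argument behind Martens' theorem rather than injectivity: for a maximal-dimensional component $X$ of $W^r_d$, either the subtraction map $X\times C\to\Pic^{d-1}$ has generically finite fibers, giving $\dim W^{r-1}_{d-1}\ge\dim X+1$, or its generic fiber is all of $C$, which (using specialty, i.e., finiteness of $\mathrm{Bs}\,|K-M|$ when $h^0(M)=r$) forces $X\subseteq W^{r+1}_d$, and one repeats with $r+1$ until Clifford's theorem terminates the induction. If you substitute this lemma (or the corresponding statement from ACGH, Ch.~IV) for the injectivity claim, the rest of your argument goes through; you should also stratify $\mathcal{I}$ by the loci where $h^0$ equals exactly $r+1$, so that the fiber bound $\le r$ over $|L|$ actually applies, but that is a routine fix. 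The net effect is a correct but heavier proof than the paper's, whose pointwise projection argument avoids the higher $W^r$'s and the Martens-type input altogether.
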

\begin{proof}
Either condition implies that $K$ is $p$-very ample. Note also that either condition
for some value of $k$ implies the same condition for all lower values.

(i) implies (ii): 
By induction we can suppose that (ii) holds for all $0\le j\le k-2$. Now suppose 
$\dim W^1_{p+k+1}\ge k-1$, and let $x_1,\dotsc,x_k$ be an arbitrary sequence 
of points (possibly with repetitions). Then the internal projection of $C$ from the 
points $x_1,\dotsc,x_k$ into $\P^{g-k}$ is not $p$-very ample: 
One sees inductively (for each $l\le k-1$) that the subvariety of $W^1_{p+k+1}$ that 
consists of all pencils with $x_1,\dotsc,x_l$ as base locus, has dimension $\ge k-1-l$. 
Considering $l=k-1$, there will be a $g^1_{p+k+1}$ with base locus $x_1,\dotsc,x_{k-1}$, 
hence the projection from $x_1,\dotsc,x_k$ will have a $(p+1)$-secant
$(p-1)$-plane and $K_C\otimes\O_C(-x_1\dotsc-x_k)$ will not be $p$-very ample.

(ii) implies (i): 
Suppose $\dim W^1_{p+j+2}<j$ for all $0\le j\le k-1$, and assume we have determined points $x_1,\dotsc,x_l\in C$ 
so that each of the subvarieties of $W^1_{p+j+2}$ consisting of pencils with base locus $x_1,\dotsc,x_l$ has
dimension $<j-l$. Given an irreducible component of such a subvariety of $W^1_{p+j+2}$, the general point of $C$ will
not be a base point of each of the pencils in this component; i.e., for general $x$, the subvariety of
pencils with base locus $x_1,\dotsc,x_l,x$ will have dimension $<j-l-1$. As each such subvariety of $W^1_{p+j+2}$ has
only finitely many components, and there are only finitely many values of $j$ to consider, we conclude
that there is a point $x_{l+1}\in C$ so that each of the subvarieties of $W^1_{p+j+2}$ consisting of 
pencils with base locus $x_1,\dotsc,x_{l+1}$ has dimension $<j-l-1$. 

Finally, for $l=k$ we have points $x_1,\dotsc,x_k$ with the property that there is no pencil in $W^1_{p+k+2}$
with these points as base locus, i.e., $K_C\otimes\O(-x_1\dotsc -x_k)$ is $p$-very ample.
\end{proof}

\begin{remark}
We list some of the consequences:

1. Suppose $\gon(C)\ge p+3$, i.e., $W^1_{p+2}=\emptyset$ and $K_C$ is $(p+1)$-very ample. 
Then $K_{p,1}(C,K;L)=0$ for $\deg L\ge 4g-5$.
Similarly, if $\gon(C)\ge p+3$ and $\dim W^1_{p+3}\le 0$ (i.e., $W^1_{p+3}$ is empty or zero-dimensional),
then $K_{p,1}(C,K;L)$ vanishes for $\deg L\ge 4g-7$.

2. Suppose $C$ fulfils the conditions of \textup{(\ref{prop310}(ii))} for $k=g-2-2p$, and
let $0\le j\le p$. Then $K_{j,1}(C,K;L)=0$ for $\deg(L)\ge 2g+4j+1$.
\end{remark}

\bigskip
\textsc{Flurstra\ss e 49, 82110 Germering, Germany}

\medskip
\textit{E-mail address}: {\tt Juergen\_Rathmann@yahoo.com}

\end{document}